\numberwithin{equation}{section}
\newcommand{\R}{\mathbb{R}}
\newcommand{\F}{\mathcal{F}}
\newcommand{\N}{\mathbb{N}}
\renewcommand{\L}{\mathcal{L}}
\newcommand{\G}{\mathcal{G}}
\newcommand{\1}{\mathbf{1}}
\newcommand{\dd}{\mathrm{d}}
\theoremstyle{plain}
\newtheorem{thm}{Theorem}[section]
\newtheorem*{thm*}{Theorem}
\newtheorem{lem}[thm]{Lemma}
\newtheorem{cor}[thm]{Corollary}
\newtheorem{prop}[thm]{Proposition}
\newtheorem*{prop*}{Proposition}
\theoremstyle{definition}
\theoremstyle{remark}
\newtheorem{rmk}[thm]{Remark}
\title{Conditioned Martingales\thanks{We thank Peter Carr, Peter Imkeller, and Kostas Kardaras for their comments and suggestions. We are very grateful to Ioannis Karatzas for careful reading an earlier version of this paper.  We thank an anonymous referee for their supportive remarks. N.~P. is supported by a Ph.D. scholarship of the Berlin Mathematical School.}} 
\author{Nicolas Perkowski\thanks{Perkowsk@mathematik.hu-berlin.de}\\Institut f\"ur Mathematik\\Humboldt-Universit\"at zu Berlin \and Johannes Ruf\thanks{Johannes.Ruf@oxford-man.ox.ac.uk}\\Oxford-Man Institute of Quantitative Finance\\University of Oxford}
\date{This draft: \today}
\begin{document}
\maketitle

\begin{abstract}
   It is well known that upward conditioned Brownian motion is a three-dimensional Bessel process, and that a downward conditioned Bessel process is a Brownian motion. We give a simple proof for this result, which generalizes to any continuous local martingale and clarifies the role of finite versus infinite time in this setting. As a consequence, we can describe the law of regular diffusions that are conditioned upward or downward.
   
\textbf{Keywords:} Doob's h-transform, change of measure; upward conditioning; downward conditioning; local martingale; diffusion; nullset; Bessel process.

\textbf{AMS MSC 2010:}  60G44;  60H99;  60J60.
\end{abstract}

\section{Introduction}
We study the law $Q$ of a continuous nonnegative $P$-local martingale $X$, if conditioned never to  hit zero. The key step in our analysis is the simple observation that  the conditional measure $Q$, on the corresponding $\sigma$-algebra, is given by $(X_{T}/X_0) \dd P$, where $T$ denotes the first hitting time of either $0$ or another value $y>X_0$.  This observation relates the change of measure over an infinite time horizon (through a
conditioning argument) to the change of measure in finite time (via the Radon-Nikodym derivative $X_{T}$).

Under the conditional measure $Q$, the process $X$ diverges to $\infty$, and $1/X$ is a local martingale. This insight allows us to condition $X$ downwards, which corresponds to conditioning $1/X$ upwards and can therefore be treated with our previously developed arguments.
In the case of a diffusion it is possible to write down the dynamics of the upward conditioned process explicitly, defined via its scale function,   - and similarly for downward conditioned diffusion.
  
For example, if $X$ is a $P$-Brownian motion stopped in 0, then $X$ is a $Q$-three-dimensional Bessel process.  This connection of Brownian motion and Bessel process has been well known, at least since 
 the work of McKean \cite{McKean_1963} building on Doob \cite{Doob_1957}. 
Following McKean, several different proofs were given for this result, mostly embedding this statement in a more general result such as the one about path decompositions in Williams \cite{Williams_1974}. Most of these proofs are analytical and rely strongly on the Markov property of Brownian motion and Bessel process - or even on the fact that the transition densities are known for these processes. 

As the study of the law of upward and downward conditioned processes  has usually not been the main focus of these papers,  results have, to the best of our knowledge, not been proven in the full generality of this paper, and the underlying arguments were often only indirect. 
Our proof uses only elementary arguments, it is probabilistic, and works for continuous local martingales and certain jump processes.
We show that in finite time it is not possible to obtain a Bessel process by conditioning a Brownian motion not to hit zero and we point out that conditioning a Brownian motion upward and conditioning a Bessel process downward can be understood using the same result.

In Subsection~\ref{sec: upwd} we treat the case of upward conditioning of local martingales and in Subsection~\ref{sec: downwd} the case of downward conditioning. In Section~\ref{sec: diff} we study the implications of these results for diffusions.  In Appendix~\ref{A conditioning} we illustrate that conditioning on a nullset (such as the Brownian motion never hitting zero) is highly sensitive with respect to the approximating sequence of sets. Appendix~\ref{A proof generator} contains the slightly technical proof of Proposition~\ref{P generator}, which describes the change of dynamics of a diffusion after a change of measure. In Appendix~\ref{A jumps} we study a class of jump processes that can be treated with our methods.

\subsubsection*{Review of existing literature}
The connection of Brownian motion and the three-dimensional Bessel process has been studied in several important and celebrated papers. Most of these studies have focused  on more general statements than this connection only. To provide a complete list of references is beyond this note. In the following paragraphs, we try to give an overview of some of the most relevant and influential work in this area.

For a Markov process $X$, Doob \cite{Doob_1957} studies its $h$-transform, where $h$ denotes an excessive function such that, in particular, $h(X)$ is a supermartingale. Using $h(X)/h(X_0)$ as a Radon-Nikodym density,
a new (sub-probability) measure is constructed. Doob shows, among many other results, that, if $h$ is harmonic (and additionally ``minimal,'' as defined therein), the process $X$ converges under the new measure to the points on the extended real line where $h$ takes the value infinity. In this sense, 
changing the measure corresponds to conditioning the process to the event that $X$ converges to these points. For example, if $X$ is Brownian motion started in 1, then $h(x) = x$ is harmonic and leads to a probability measure, under which $X$, now distributed as a Bessel process, tends to infinity. Our results also yield this observation; furthermore they contain the case of non-Markovian processes $X$ that are nonnegative local martingales only. 

An analytic proof of the fact that upward conditioned Brownian motion is a three-dimensional Bessel process is given in McKean's work \cite{McKean_1963} on Brownian excursions. He shows that if $W$ is a Brownian motion started in $1$, if $B \in \F_s$, where $\F_s$ is the $\sigma$-algebra generated by $W$ up to time $s$ for some $s>0$, and if $T_0$ is the hitting time of 0, then
$
   P(W \in B | T_0 > t) \rightarrow P(X \in B)
$
as $t \uparrow \infty$, where $X$ is a three-dimensional Bessel process. The proof is  based on techniques from partial differential equations. In that article, also a path decomposition is given for excursions of Brownian motion in terms of two Bessel processes, one run forward in time, and the other one run backward. McKean already generalizes all these results to regular diffusions. 

Knight \cite{Knight_1969} computes the dynamics of Brownian motion conditioned to stay either in the interval $[-a, a]$ or $(-\infty, a]$ for some $a > 0$ and thus, derives also the Bessel dynamics. To obtain these results, Knight uses
a very astute argument based on inverting Brownian local time. He moreover illustrates the complications arising from conditioning on nullsets by providing an insightful example; we shall give another example based on a direct argument, without the necessity of any computations, in 
Appendix~\ref{A conditioning} to illustrate this point further.

In his seminal paper on path decompositions, Williams \cite{Williams_1974} shows that Brownian motion conditioned not to hit zero corresponds to the Bessel process. His results extend to diffusions and reach far beyond this observation.
For example, he shows that ``stitching'' a Brownian motion up to a certain stopping time and a three-dimensional Bessel process together yields another Bessel process.
In Pitman and Yor \cite{Pitman_Yor_1981} this approach is generalized to killed diffusions. A diffusion process is killed with constant rate and conditioned to hit infinity before the killing time. This allows the interpretation of a two-parameter Bessel process as an upward conditioned one-parameter Bessel process.

Pitman \cite{Pitman_1975} proves essentially Lemma \ref{lem: upward conditioning} of this paper in the Brownian case. This is achieved by approximating the continuous processes by random walks, which can be counted.
For the continuous case, the statement then follows by a weak convergence argument. The main result of that article is Pitman's famous theorem that $2W^*-W$ is a Bessel process if $W$ is a Brownian motion and $W^*$ its running maximum.

Baudoin \cite{Baudoin_2002} takes a different approach. Given a Brownian motion, a functional $Y$ of its path and a distribution $\nu$, Baudoin constructs a probability measure  under which  $Y$ is distributed as $\nu$. The recent monograph by Roynette and Yor \cite{RoynetteYor} studies penalizations of Brownian paths, which can be understood as a generalization of conditioned Brownian motion. Under the penalized measure, the coordinate process can have radically different behavior than under the Wiener measure. In our example it does not hit zero. In Roynette and Yor \cite{RoynetteYor} there is an example of a penalized measure under which the supremum process stays almost surely bounded.

\section{General case: continuous local martingales}  \label{S general}
Let $\Omega = C_{\mathrm{abs}} := C_\text{abs}(\R_+, [0,\infty])$ be the space of $[0,\infty]$-valued functions $\omega$ that are absorbed in 0 and $\infty$, and that are continuous on $[0,T_\infty(\omega))$, where $T_\infty(\omega)$ denotes the first hitting time of $\{\infty\}$ by $\omega$, to be specified below. Let $X$ be the coordinate process, that is, $X_t(\omega) = \omega(t)$. Define, for sake of notational simplicity, $X_\infty := \sqrt{\limsup_{t \uparrow \infty} X_t \liminf_{t \uparrow \infty} X_t}$ (with $\infty \cdot 0 := 1$).\footnote{The definition of $X_\infty$ is not further relevant as $X$ converges (or diverges to infinity) almost surely under all measures that we shall consider. We chose this definition of $X_\infty$ since it commutes with taking the reciprocal $1/X_\infty$.}  Denote the canonical filtration by $(\F_t)_{t \geq 0}$ with $\F_t = \sigma(X_s: s \le t)$, and write $\F = \vee_{t \ge 0} \F_t$. 
For all $a \in [0, \infty]$,  define $T_a$ as the first hitting time of $\{a\}$, to wit,
\begin{align}  \label{E T}
   T_a = \inf\{ t \in [0, \infty]: X_t = a\} 
\end{align} 
with $\inf \emptyset := \mathfrak{T}$, representing a time ``beyond infinity.'' The introduction of $\mathfrak{T}$ allows for a unified approach to treat examples like geometric Brownian motion. We shall extend the natural ordering to $[0,\infty] \cup \{\mathfrak{T}\}$ by $t < \mathfrak{T}$ for all $t \in[0, \infty]$. For all stopping times $\tau$, define the $\sigma$-algebras $\F_\tau $ as
\begin{align*}
   \F_\tau & = \{ A \in \F: A \cap \{\tau \le t\} \in \F_t \quad \forall t \in [0, \infty)\}  = \sigma(X^{\tau}_s: s < \infty) = \sigma(X^{\tau \wedge T_0}_s: s < \infty),
\end{align*}
where $X^{\tau} \equiv X^{\tau \wedge T_0}$  is the process $X$ stopped at the stopping time $\tau$.
Let $P$ be a probability measure on $(\Omega, \F)$, such that $X$ is a nonnegative local martingale with $P(X_0 = 1) = 1$.

\subsection{Upward conditioning}  \label{SS upward}
\label{sec: upwd}
In this section, we study the law of the local martingale $X$ conditioned never to hit zero. This event can be expressed as
\begin{align}  \label{E iden}
	 \{T_0 = \mathfrak{T}\} = \bigcap_{a \in [0, \infty)} \{T_a \le T_0\} \supset \bigcup_{a \in (0, \infty]} \{T_a \wedge T_0 = \mathfrak{T}\}.
\end{align}
The core of this article is the following simple observation:
\begin{lem}[Upward conditioning]\label{lem: upward conditioning}
   If  $P(T_a \wedge T_0 < \mathfrak{T}) =1$ for some $a \in (1, \infty)$, we have that
   \begin{align*}
      \dd  P(\cdot | T_a \le T_0) = X_{T_a} \dd  P.
   \end{align*}
\end{lem}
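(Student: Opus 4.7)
The plan is to apply the optional stopping theorem to the bounded nonnegative martingale $X^T$, where $T := T_a \wedge T_0$ is the first exit time of $X$ from the interval $(0,a)$. The whole argument is essentially a gambler's-ruin style computation.

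First I would observe that, by hypothesis, $P(T < \mathfrak{T}) = 1$, and since $X$ is continuous on $[0, T_\infty)$ and is trapped in $[0,a]$ up to time $T$, we have $T < T_\infty$ almost surely and, by continuity, $X_T \in \{0, a\}$. Because $a > 0$, the two candidates are incompatible at a single time, so $X_T = a \mathbf{1}_{\{T_a \le T_0\}}$ on $\{T < \mathfrak{T}\}$. Next, $X^T$ is a nonnegative local martingale bounded by $a$, hence a uniformly integrable true martingale, and optional stopping yields $1 = X_0 = \E[X_T] = a\, P(T_a \le T_0)$, so $P(T_a \le T_0) = 1/a$.

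To conclude, one invokes the absorbing convention $X^{T_a} \equiv X^{T_a \wedge T_0}$ built into the paper's definition of $\F_{T_a}$. Under this convention the random variable $X_{T_a}$ coincides with $X_T = a \mathbf{1}_{\{T_a \le T_0\}}$ on all of $\Omega$, and therefore
\[
  X_{T_a} \, \dd P \;=\; a\, \mathbf{1}_{\{T_a \le T_0\}} \, \dd P \;=\; \frac{\mathbf{1}_{\{T_a \le T_0\}}}{P(T_a \le T_0)}\, \dd P \;=\; \dd P(\cdot \mid T_a \le T_0),
\]
which is the desired identity.

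The only mildly delicate point is purely notational: one must track the convention $X^{T_a} = X^{T_a \wedge T_0}$ together with the ``beyond infinity'' time-point $\mathfrak{T}$. On $\{T_0 < T_a\}$ we formally have $T_a = \mathfrak{T}$, but $X$ has already been absorbed at $0$, so $X_{T_a}$ is consistently interpreted as $0$ there, in agreement with the formula $a\mathbf{1}_{\{T_a \le T_0\}}$. Once this bookkeeping is accepted, the lemma is a one-line consequence of optional stopping, and I do not anticipate any real obstacle beyond writing down the identification of $X_T$ with care.
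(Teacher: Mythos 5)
Your proof is correct and follows essentially the same route as the paper: both arguments note that $X$ stopped at the exit time of $(0,a)$ is a bounded, hence uniformly integrable, martingale, apply optional stopping to get $P(T_a \le T_0) = 1/a$, and identify $X_{T_a}$ (under the convention $X^{T_a} \equiv X^{T_a \wedge T_0}$) with $a\mathbf{1}_{\{T_a \le T_0\}}$. Your write-up merely makes the notational bookkeeping around $\mathfrak{T}$ and the absorption convention more explicit than the paper does.
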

\begin{proof}
   Note that $X^{T_a}$ is bounded and thus a uniformly integrable martingale. In particular,
   \begin{align*}
      1 = E_P(X^{T_a}_\infty) = a P( T_a \le T_0 ) + 0,
   \end{align*}
   which implies that, for all $A \in \F$,
   \begin{align*}
      P(A|T_a \le T_0) = \frac{P(A \cap\{ T_a \le T_0\})}{P( T_a\le T_0)} = \frac{ P(A \cap\{T_a \le T_0\})}{\frac{1}{a}} = E_P\left( X^{T_a}_\infty 1_A\right),
   \end{align*}
yielding the statement.
\end{proof}

\subsubsection*{Three different probability measures}
Consider three possible probability measures:
\begin{enumerate}
	\item The local martingale $X$ introduces an \emph{$h$-transform} $Q$ of $P$. This is the unique probability measure $Q$ on $(\Omega, \F)$ that satisfies
   $
   \dd Q|_{\F_\tau} = X_\tau \dd P|_{\F_\tau}
  $
   for all stopping times $\tau$ for which $X^\tau$ is a uniformly integrable martingale. The probability measure $Q$ is called the \emph{F\"ollmer measure} of $X$, see F\"ollmer \cite{F1972} and Meyer \cite{M}.\footnote{See also Delbaen and Schachermayer \cite{DS_Bessel} for a discussion of this measure, Pal and Protter \cite{PP} for the extension to infinite time horizons and Carr, Fisher, and Ruf \cite{CFR2011} for allowing nonnegative local martingales.} 	Note that the construction of this measure does not require the density process $X$ to be the canonical process on $\Omega$ - the extension only relies on the topological structure of $\Omega = C_{\mathrm{abs}}$. This will be important later, when we consider diffusions. We remark that, in the case of $X$ being a $P$-martingale, we could also use a standard extension theorem, such as Theorem~1.3.5 in Stroock and Varadhan \cite{SV_multi}.
	\item If $P(T_0 = \mathfrak{T}) = 0$, Lemma~\ref{lem: upward conditioning} in conjunction with \eqref{E iden} directly yields the consistency of the family of probability measures
$ \{P(\cdot | T_a \le T_0)\}_{a > 1}$
on the filtration $(\F_{ T_a})_{a >1}$.  By  F\"ollmer's construction again, there exists a unique probability measure $\widetilde{Q}$ on $(\Omega, \F)$, such that $\widetilde{Q}|_{\F_{T_a}} = P(\cdot|T_a \le T_0)|_{\F_{T_a}}$. 
	\item If $P(T_0 = \mathfrak{T})  > 0$, we can define the probability measure $\widehat{Q}(\cdot) = P(\cdot | T_0 = \mathfrak{T})$ via the Radon-Nikodym derivative $1_{\{T_0 = \mathfrak{T}\}}/P(T_0 = \mathfrak{T})$. 
\end{enumerate}

Since in the case $P(T_0 = \mathfrak{T}) = 0$,  we have $\{T_a \le T_0\}  =_{P-a.s.}   \{T_a < T_0\}$ for all $a \in (0,\infty]$,
the measure $\widetilde{Q}$ is also called \emph{upward conditioned} measure since it is constructed by iteratively conditioning the process $X$ to hit any level $a$ before hitting $0$.

\subsubsection*{Relationship of probability measures}
We are now ready to relate the three probability measures constructed above:
\begin{thm}[Identity of measures]\label{thm: h-transform}
   Set $b := P(T_0= \mathfrak{T}) =  P(X_\infty > 0)$. If $b= 0$, then $Q = \widetilde{Q}$. If $b  >  0$, then $Q = \widehat{Q}$ if and only if $X$ is a uniformly integrable martingale with $P(X_\infty \in \{0,1/b\})=1$.
\end{thm}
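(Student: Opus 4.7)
The plan is to treat the two cases $b=0$ and $b>0$ separately, using in both the characterization of the F\"ollmer measure $Q$ by $\dd Q|_{\F_\tau} = X_\tau \dd P|_{\F_\tau}$ for stopping times $\tau$ with $X^\tau$ uniformly integrable (UI).

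For $b=0$, we have $T_a\wedge T_0\le T_0<\mathfrak{T}$ $P$-a.s., so the hypothesis of Lemma~\ref{lem: upward conditioning} holds for every $a>1$ and yields $\dd P(\cdot\mid T_a\le T_0) = X_{T_a}\dd P$. Restricted to $\F_{T_a}$ this is, on the one hand, $\widetilde Q|_{\F_{T_a}}$ by construction of $\widetilde Q$, and, on the other hand, $Q|_{\F_{T_a}}$ because $X^{T_a}$ is bounded, hence a UI martingale. Since for every $t\ge 0$ the $P$-a.s.\ finiteness of $\sup_{s\le t}X_s$ gives $X_s = \lim_{a\uparrow\infty} X_{s\wedge T_a}$ $P$-a.s.\ for $s\le t$, the family $(\F_{T_a})_{a>1}$ generates $\F$ modulo $P$-nullsets, and the uniqueness of the F\"ollmer extension yields $Q=\widetilde Q$.

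For $b>0$, consider first the ``if'' direction: assume $X$ is a UI $P$-martingale with $P(X_\infty\in\{0,1/b\})=1$. Then $1 = E_P(X_\infty) = (1/b) P(X_\infty = 1/b)$ forces $P(X_\infty=1/b)=b$, and since absorption at $0$ on $\Omega=C_{\mathrm{abs}}$ always gives $\{X_\infty>0\}\subseteq\{T_0=\mathfrak{T}\}$, equality of their probabilities yields $X_\infty = (1/b)\1_{\{T_0=\mathfrak{T}\}}$ $P$-a.s. The UI assumption lets the F\"ollmer extension be written as $\dd Q = X_\infty\dd P$ on all of $\F$, which equals $\dd\widehat Q$. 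For the ``only if'' direction, assume $Q=\widehat Q$ and set $Z := \dd\widehat Q/\dd P = (1/b)\1_{\{T_0=\mathfrak{T}\}}$. Applying the F\"ollmer identity at $\tau_{a,t} := t\wedge T_a$, whose stopped process $X^{\tau_{a,t}}$ is bounded and therefore UI, yields $X_{\tau_{a,t}} = E_P(Z\mid \F_{\tau_{a,t}})$ $P$-a.s. Letting $a\uparrow\infty$, the $P$-a.s.\ boundedness of $X$ on $[0,t]$ gives $\tau_{a,t}\uparrow t$ and $X_{\tau_{a,t}}\to X_t$ $P$-a.s., while L\'evy's upward theorem applied to $\F_{\tau_{a,t}}\uparrow\F_t$ (modulo $P$-nullsets) gives $E_P(Z\mid\F_{\tau_{a,t}})\to E_P(Z\mid\F_t)$. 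Combining, $X_t = E_P(Z\mid\F_t)$, so $X$ is a UI $P$-martingale closed by $Z$, and hence $X_\infty = Z\in\{0,1/b\}$ $P$-a.s.

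The step I expect to be the main obstacle is this last passage from the F\"ollmer identity at the stopping times $t\wedge T_a$ to an identity at the deterministic time $t$: it is what promotes $X$ from a local martingale to a UI closed martingale, and it is where $Q=\widehat Q$ enters substantively. The key technical care lies in checking that $\F_{t\wedge T_a}\uparrow \F_t$ modulo $P$-nullsets, so that L\'evy's upward theorem yields the correct conditional expectation in the limit; everything else in the argument is a fairly direct use of the F\"ollmer construction and the lemma proved just above.
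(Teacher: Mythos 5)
Your proof is correct and follows essentially the same route as the paper: identify $Q$ and $\widetilde Q$ on each $\F_{T_a}$ via Lemma~\ref{lem: upward conditioning} and the boundedness of $X^{T_a}$ when $b=0$, and for $b>0$ exploit the Föllmer identity along $t\wedge T_a$ to pass between $X$ and the density $\1_{\{T_0=\mathfrak{T}\}}/b$. If anything, your treatment of the ``only if'' direction is more careful than the paper's one-line assertion that $\dd\widehat Q/\dd P|_{\F_t}\le 1/b$ forces $X_t\le 1/b$, since you make explicit the localization at $t\wedge T_a$ and the limit via L\'evy's upward theorem that justify applying the Föllmer identity at a deterministic time.
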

\begin{proof}
  First, consider the case  $b= 0$.  Both $Q$ and $\widetilde{Q}$ satisfy, for all $a > 1$,
   \begin{align*}
      \dd \widetilde{Q}|_{\F_{T_a}} = X_{T_a} \dd P|_{\F_{T_a}} = \dd Q|_{\F_{T_ a}}.
   \end{align*}
   Thus $Q$ and $\widetilde{Q}$  agree on $\vee_{a >1} \F_{T_a} = \vee_{a > 1} \sigma(X^{T_a}_{t}: t \ge 0)= \F$.

	Next, consider the case  $b  > 0$. Then,  $Q = \widehat{Q}$ and  $\dd \widehat{Q} / \dd  P|_{\F_t} \leq 1/b$ imply that $X_t \leq 1/b$, yielding that $X$ is a uniformly integrable martingale with $X_\infty = \dd Q / \dd P  \in \{0,1/b\}$. For the reverse direction,   observe that
	 $X_\infty = 1_{\{T_0 = \mathfrak{T}\}}/b$. This observation together with its uniform integrability completes the proof.
\end{proof}

This theorem implies, in particular, that in finite time the three-dimensional Bessel process cannot be obtained from conditioning Brownian motion not to hit zero.  However, over finite time-horizons, a Bessel-process can be constructed via the $h$-transform $X_T \dd P$, when $X$ is $P$-Brownian motion started in 1 and stopped in 0. Over infinite time-horizons, one has two choices; the first one is using an extension theorem for the $h$-transforms, the second one is
conditioning $X$ not to hit 0 by approximating this nullset by the sequence of events that $X$ hits any $a>0$ before it hits $0$.

\begin{rmk}[Conditioning on nullsets]
   We remark that the interpretation of the measure $\widetilde{Q}$ as $P$ conditioned on a nullset requires specifying an approximating sequence of that nullset. In Appendix~\ref{A conditioning} we illustrate this subtle but important point.  \qed
\end{rmk}

\begin{rmk}[The trans-infinite time $\mathfrak{T}$]
	The introduction of $\mathfrak{T}$ in this subsection allows us to introduce the upward-conditioned measure $\widetilde{Q}$ and to show its equivalence to the $h$-transform $Q$ if $X$ converges to zero but not necessarily hits zero in 
finite time, such as $P$-geometric Brownian motion.  If one is only interested in processes as, say, stopped Brownian motion, then one could formulate all results in this subsection in the standard way when $\inf \emptyset := \infty$ in \eqref{E T}. One would then need to exchange $\mathfrak{T}$ by $\infty$ throughout this subsection; in particular, one would have to assume in Lemma~\ref{lem: upward conditioning} that $P(T_a \wedge T_0 < \infty) =1$ and replace the condition $P(T_0 = \mathfrak{T}) = 0$ by
$P(T_0 = \infty) = 0$ for the construction of the upward-conditioned measure $\widetilde{Q}$.  \qed
\end{rmk}

We note that the arguments of this section can be extended to certain jump processes. In Appendix~\ref{A jumps} we treat a simple random walk example to illustrate this observation.

\subsection{Downward conditioning}
\label{sec: downwd}
In this subsection, we consider the converse case of conditioning $X$ downward instead of upward. Towards this end, we first provide a well-known
 result; see for example \cite{CFR2011}. For sake of completeness, we provide a direct proof:
\begin{lem}[Local martingality of $1/X$]  \label{C 1/x}
	Under the $h$-transformed measure $Q$, the process $1/X$ is a nonnegative local martingale and $Q(T_\infty = \mathfrak{T}) = E_P[X_\infty]$.
\end{lem}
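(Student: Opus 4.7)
First I would establish the local martingale property of $1/X$. The natural localizing sequence is $\tau_n := T_{1/n}\wedge T_n$: on $[0,\tau_n]$ the coordinate process stays in $[1/n,n]$, so $X^{\tau_n}$ is bounded and hence a uniformly integrable $P$-martingale, giving the F\"ollmer relation $\dd Q|_{\F_{\tau_n}} = X_{\tau_n}\,\dd P|_{\F_{\tau_n}}$. For stopping times $\sigma_1\le \sigma_2\le \tau_n$, the Bayes rule for change of measure together with optional sampling of $X^{\tau_n}$ should then give
\begin{align*}
   E_Q[1/X_{\sigma_2} \mid \F_{\sigma_1}] = \frac{E_P[X_{\tau_n}/X_{\sigma_2} \mid \F_{\sigma_1}]}{E_P[X_{\tau_n}\mid \F_{\sigma_1}]} = \frac{E_P[X_{\sigma_2}/X_{\sigma_2} \mid \F_{\sigma_1}]}{X_{\sigma_1}} = 1/X_{\sigma_1},
\end{align*}
where the middle equality uses the tower property in the form $E_P[X_{\tau_n}\mid \F_{\sigma_2}]=X_{\sigma_2}$. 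Since $(1/X)^{\tau_n}$ is bounded by $n$, it is a $Q$-martingale, and $(\tau_n)$ is a localizing sequence for $1/X$.

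For the identity $Q(T_\infty=\mathfrak T)=E_P[X_\infty]$, the plan is to compute $Q(T_n<\mathfrak T)$ and pass to the limit. Because $X$ is continuous on $[0,T_\infty)$ and $T_n$ is nondecreasing in $n$, the events $\{T_n<\mathfrak T\}$ are decreasing with intersection $\{T_\infty<\mathfrak T\}$. Applying the F\"ollmer relation with $\tau=T_n$ (again $X^{T_n}$ is bounded) and using $X_{T_n}=n$ on $\{T_n<\mathfrak T\}$ yields
\begin{align*}
   Q(T_n<\mathfrak T) = E_P[X^{T_n}_\infty \mathbf 1_{\{T_n<\mathfrak T\}}] = n\, P(T_n<\mathfrak T).
\end{align*}
Optional sampling of the UI $P$-martingale $X^{T_n}$ at time $\infty$ gives $1 = n\, P(T_n<\mathfrak T) + E_P[X_\infty \mathbf 1_{\{T_n=\mathfrak T\}}]$. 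Since $X$ is a nonnegative $P$-supermartingale, Doob's maximal inequality forces $\sup_t X_t<\infty$ $P$-a.s., so $\mathbf 1_{\{T_n=\mathfrak T\}}\uparrow 1$ $P$-a.s.; monotone convergence then yields $n\, P(T_n<\mathfrak T)\to 1-E_P[X_\infty]$. Passing to the limit in the $Q$-probability display above gives $Q(T_\infty<\mathfrak T)=1-E_P[X_\infty]$, which is the desired identity.

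The step I expect to require the most care is checking that $(\tau_n)$ genuinely localizes $1/X$ in the time horizon used in this paper, because under $Q$ the event $\{T_\infty<\mathfrak T\}$ of $X$ exploding may carry positive mass by the second assertion. Here one exploits that under $Q$ the density $X_{\tau_n}$ is strictly positive $Q$-a.s., so that $T_0=\mathfrak T$ $Q$-a.s.\ and hence $\tau_n\uparrow T_\infty$; past $T_\infty$ the coordinate process is absorbed at $\infty$ and $1/X\equiv 0$ is trivially a martingale, so no separate localization beyond $T_\infty$ is needed.
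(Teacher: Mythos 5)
Your strategy coincides with the paper's: stop at hitting times, use the F\"ollmer/Bayes identity $\dd Q|_{\F_{\tau}}=X_{\tau}\,\dd P|_{\F_{\tau}}$ to get the $Q$-martingale property of the stopped reciprocal, and compute $Q(T_\infty=\mathfrak T)$ from $Q(T_m<\infty)=m\,P(T_m<\infty)$ plus optional sampling of the uniformly integrable martingale $X^{T_m}$. The second half of your argument is essentially the paper's proof verbatim and is fine.

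The genuine weak point is the localization. Your sequence $\tau_n=T_{1/n}\wedge T_n$ increases only to $T_\infty\wedge\lim_n T_{1/n}$, and $Q(T_\infty<\infty)$ may be positive (it equals $1-E_P[X_\infty]$ up to the boundary cases), so $(\tau_n)$ is not a localizing sequence; and the repair you offer --- $1/X$ is a martingale before $T_\infty$ and constant afterwards, hence nothing more is needed --- is not a valid inference. A process that is a martingale up to a time and frozen at its limit afterwards can lose mass at the junction: take $M_t=\exp\bigl(B_{t/(1-t)}-\tfrac{t}{2(1-t)}\bigr)$ for $t<1$ and $M_t=0$ for $t\ge1$; this is a continuous positive martingale on $[0,1)$, constant on $[1,\infty)$, yet $E[M_1]=0\neq M_0$. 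What saves your argument here is the uniform bound coming from the floor at $1/n$: $(1/X)^{T_{1/n}\wedge T_m}$ is a $Q$-martingale bounded by $n$ uniformly in $m$, and $1/X_{T_m}=1/m\to0$ while $X$ is absorbed at $\infty$ after $T_\infty$, so conditional bounded convergence as $m\uparrow\infty$ shows that $(1/X)^{T_{1/n}}$ itself is a $Q$-martingale. This is exactly the content of the paper's displayed computation, which splits on $\{T_m>t\}$ and $\{T_\infty\le t\}$. Finally, you still must show $T_{1/n}\uparrow\infty$ $Q$-a.s.; your appeal to $T_0=\mathfrak T$ $Q$-a.s.\ does not by itself rule out that the levels $1/n$ are reached at times accumulating at a finite value, but the one-line estimate $Q(T_{1/n}\le t)=E_P\bigl[X_{T_{1/n}}\1_{\{T_{1/n}\le t\}}\bigr]\le 1/n$ (the paper's argument, phrased with the barrier $T_m$) closes this. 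With these two repairs your proof is complete and agrees with the paper's.
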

\begin{proof}
	Observe that
\begin{align*}f
   E_Q\left(1_{A} \frac{1}{X^{T_{1/n}}_{t+s}}\right) &=  \lim_{m \uparrow \infty} E_Q\left(1_{A \cap \{T_{m} > t\}} \frac{1}{X^{ T_{1/n} \wedge T_m}_{t+s}}\right)  +  E_Q\left(1_{A \cap \{T_{\infty} \le t\}} \frac{1}{X^{ T_{1/n}}_{t+s}}\right) \\
	&=  \lim_{m \uparrow \infty}  E_P\left(1_{A \cap \{T_{m} > t\}} \frac{1}{X^{ T_{1/n} \wedge T_m}_{t+s}}  X^{T_{m}}_{t+s}\right) +  E_Q\left(1_{A \cap \{T_{\infty} \le t\}} \frac{1}{X^{ T_{1/n}}_{t}}\right)\\
	& = \lim_{m \uparrow \infty}  E_P\left(1_{A \cap \{T_{m} > t\}} \frac{1}{X^{ T_{1/n} \wedge T_m}_{t}}  X^{T_{m}}_{t}\right) +  E_Q\left(1_{A \cap \{T_{\infty} \le t\}} \frac{1}{X^{ T_{1/n}}_{t}}\right) \\
	& = \lim_{m \uparrow \infty}  E_Q\left(1_{A \cap \{T_{m} > t\}} \frac{1}{X^{ T_{1/n} \wedge T_m}_{t}}\right) +  E_Q\left(1_{A \cap \{T_{\infty} \le t\}} \frac{1}{X^{ T_{1/n}}_{t}}\right) \\
	&=    E_Q\left(1_{A} \frac{1}{X^{T_{1/n}}_{t}}\right) 
\end{align*}
for all $A \in \F_t$ and $s,t \ge 0$, where in the third equality we considered the two events $\{T_{1/n} \leq t\}$ and $\{ T_{1/n}>t\}$ separately and used the $P$-martingality of $X^{T_m}$ after conditioning on $\F_t$ and $\F_{T_{1/n}}$, respectively - note that $A\cap\{T_m > t\} \cap \{ T_{1/n}>t\} \in \F_{T_{1/n}}$.

The local martingality of $1/X$ then follows from 
\begin{align*}
	Q\left(\lim_{n \rightarrow \infty}T_{1/n} < \infty\right) = \lim_{m \uparrow \infty} Q\left(\lim_{n \rightarrow \infty}T_{1/n} < T_m \wedge \infty\right)=  \lim_{m \uparrow \infty} E_P\left(1_{\{\lim_{n \rightarrow \infty}T_{1/n} < T_m\}} X^{T_m}_\infty\right) = 0 .  
\end{align*}
Therefore, $1/X$  converges $Q$-almost surely to some random variable $1/X_\infty$. We observe that 
\begin{align*}
	Q(T_\infty = \mathfrak{T}) &= 1- \lim_{m \uparrow \infty} Q(T_m < \infty)= 1 - \lim_{m \uparrow \infty} E_P(1_{\{T_m < \infty\}} X^{T_m}_\infty) \\
	&=  \lim_{m \uparrow \infty} E_P(1_{\{T_m \geq \infty\}} X_\infty) = E_P(X_\infty),
\end{align*}
where we use that $X$ converges $P$-almost surely.
\end{proof}

The last lemma directly implies the following observation:
\begin{cor}[Mutual singularity] \label{C mutual}
	We have $P(X_\infty = 0) = 1$ if and only if  $Q(X_\infty = \infty) =1$.
\end{cor}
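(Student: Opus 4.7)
The plan is to deduce this corollary almost immediately from the identity $Q(T_\infty = \mathfrak{T}) = E_P[X_\infty]$ proved in Lemma~\ref{C 1/x}, by showing that each of the two conditions in the corollary is equivalent to a side of this identity being zero.

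First, I would observe that since $X_\infty \geq 0$, we have $E_P[X_\infty] = 0$ if and only if $P(X_\infty = 0) = 1$. Thus the $P$-side of the equivalence corresponds exactly to $Q(T_\infty = \mathfrak{T}) = 0$, i.e., to $Q(T_\infty < \mathfrak{T}) = 1$.

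Second, I would show that under $Q$ the event $\{T_\infty < \mathfrak{T}\}$ coincides (up to $Q$-nullsets) with $\{X_\infty = \infty\}$. Using the path space $\Omega = C_{\mathrm{abs}}$, paths are absorbed at $\infty$ once they hit it, and by the definition of $T_\infty$ with $\inf\emptyset = \mathfrak{T}$, $T_\infty < \mathfrak{T}$ exactly means $X$ attains the value $\infty$ at some time in $[0,\infty]$; by absorption this gives $X_\infty = \infty$. For the converse inclusion I would invoke Lemma~\ref{C 1/x}: since $1/X$ is a nonnegative $Q$-local martingale, it converges $Q$-a.s., so $\limsup_{t\uparrow\infty} X_t = \liminf_{t\uparrow\infty} X_t$ $Q$-a.s.; together with the definition $X_\infty = \sqrt{\limsup X_t \cdot \liminf X_t}$ this rules out the pathological case with $\infty\cdot 0 := 1$, and $X_\infty = \infty$ forces the limit $1/X$ to vanish, which in turn forces $T_\infty \leq \infty$ (hence $T_\infty < \mathfrak{T}$) on $\Omega = C_{\mathrm{abs}}$.

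Combining these two steps gives the chain of equivalences
\[
   P(X_\infty = 0) = 1 \;\Longleftrightarrow\; E_P[X_\infty] = 0 \;\Longleftrightarrow\; Q(T_\infty = \mathfrak{T}) = 0 \;\Longleftrightarrow\; Q(X_\infty = \infty) = 1,
\]
which is the statement. The only nontrivial step is the identification of $\{T_\infty < \mathfrak{T}\}$ with $\{X_\infty = \infty\}$ under $Q$; this is where the choice of the enlarged path space $C_{\mathrm{abs}}$ with absorption at $\infty$ and the peculiar definition of $X_\infty$ pay off, and where Lemma~\ref{C 1/x} is used a second time via the $Q$-a.s.\ convergence of $1/X$.
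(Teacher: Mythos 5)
Your proof is correct and follows exactly the route the paper intends: the paper gives no explicit argument beyond ``the last lemma directly implies the following observation,'' and your chain $P(X_\infty=0)=1 \Leftrightarrow E_P[X_\infty]=0 \Leftrightarrow Q(T_\infty=\mathfrak{T})=0 \Leftrightarrow Q(X_\infty=\infty)=1$ is precisely the intended deduction from Lemma~\ref{C 1/x}, with the identification of $\{T_\infty<\mathfrak{T}\}$ and $\{X_\infty=\infty\}$ carefully justified via the absorbing path space and the $Q$-a.s.\ convergence of $1/X$. No gaps.
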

This observation is consistent with our understanding that either condition implies that the two measures are supported on two disjoint sets. 
Corollary~\ref{C mutual} is also consistent with Theorem~\ref{thm: h-transform}, which yields that $P(X_\infty =0) =1$ implies the identity $Q=\widetilde{Q}$, where $\widetilde{Q}$ denotes the  upward conditioned measure.

Lemma~\ref{C 1/x} indicates that we can condition $X$ \emph{downward} under $Q$, 
corresponding to conditioning $1/X$ \emph{upward}. The proof of the next result is exactly along the lines of the arguments in Subsection~\ref{SS upward}; however, 
now with the $Q$-local martingale $1/X$ taking the place of the $P$-local martingale $X$:

\begin{thm}[Downward conditioning]\label{thm: downward conditioning}
   If $b$ of Thereom~\ref{thm: h-transform} satisfies $b=0$, then
   \begin{align*}
      \dd Q(\cdot | T_{1/a} \leq T_\infty) = \frac{1}{X_{T_{1/a}}} \dd Q
   \end{align*}
    for all $a>1$.
   In particular, there exists a unique probability measure $\widetilde{P}$, such that $\widetilde{P}|_{\F_{T_{1/a}}} = Q(\cdot|T_{1/a} < \mathfrak{T})$; in fact, $\widetilde{P} = P$.
\end{thm}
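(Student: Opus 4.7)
The plan is to transfer the arguments of Subsection~\ref{SS upward} to the $Q$-local martingale $Y := 1/X$ (provided by Lemma~\ref{C 1/x}) via the correspondence $(X, 0, \infty, P) \leftrightarrow (Y, \infty, 0, Q)$. Corollary~\ref{C mutual} combined with $b=0$ gives $Q(X_\infty = \infty)=1$, so $Y$ converges $Q$-almost surely to zero at time $t=\infty$ at the latest, which in the coordinate notation reads $Q(T_\infty \le \infty) = 1$ and hence $Q(T_{1/a} \wedge T_\infty < \mathfrak{T})=1$ for every $a>1$. Applying Lemma~\ref{lem: upward conditioning} to $Y$ under $Q$ (note $Y_0 = 1$) and translating back to $X$ yields the first identity of the theorem. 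The existence and uniqueness of $\widetilde P$ then follow from the F\"ollmer-type extension, exactly as $\widetilde Q$ was constructed before Theorem~\ref{thm: h-transform}, but applied to $Y$ under $Q$.

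To identify $\widetilde P = P$, I would invoke Theorem~\ref{thm: h-transform} with the roles of $(X,P)$ and $(Y,Q)$ interchanged: since $Q(T_0^Y = \mathfrak{T}) = Q(T_\infty = \mathfrak{T}) = E_P[X_\infty] = 0$ (by Lemma~\ref{C 1/x} and $b=0$), that theorem identifies $\widetilde P$ with the F\"ollmer measure $Q_Y$ of $Y$ under $Q$, characterized by $\dd Q_Y|_{\F_\tau} = Y_\tau\,\dd Q|_{\F_\tau}$ whenever $Y^\tau$ is a $Q$-uniformly integrable martingale. It thus remains to show $Q_Y = P$. Consider the stopping times $\tau_n := T_{1/n} \wedge T_n$: both $X^{\tau_n}$ and $Y^{\tau_n}$ take values in the bounded interval $[1/n, n]$, so $X^{\tau_n}$ is a $P$-uniformly integrable martingale and $Y^{\tau_n}$ is a $Q$-uniformly integrable martingale. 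Composing the two F\"ollmer relations,
\[\dd Q_Y |_{\F_{\tau_n}} = Y_{\tau_n}\,\dd Q|_{\F_{\tau_n}} = Y_{\tau_n} X_{\tau_n}\,\dd P|_{\F_{\tau_n}} = \dd P|_{\F_{\tau_n}},\]
so $Q_Y = P$ on $\F_{\tau_n}$ for every $n$. Under $P$ we have $\tau_n \uparrow T_0 < \mathfrak{T}$ almost surely (because $b=0$ and $X$ is continuous), and $X$ is absorbed at zero on $[T_0,\infty)$, so $\vee_n \F_{\tau_n} = \F$ $P$-almost surely; this gives $Q_Y = P$ on $\F$.

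The one point that needs care is that neither $X^{T_{1/a}}$ nor $Y^{T_{1/a}}$ is in general a uniformly integrable martingale on its own — $X^{T_{1/a}}$ can be a strict $P$-supermartingale with terminal expectation $1/a<1$ when $X$ is a strict local martingale — so the direct F\"ollmer relation cannot be used at the single stopping time $T_{1/a}$. The double localization $\tau_n = T_{1/n} \wedge T_n$, which makes \emph{both} processes bounded and hence uniformly integrable under their respective measures, is precisely what bypasses this obstruction and permits the clean identification $Q_Y = P$.
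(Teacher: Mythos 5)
Your proposal is correct and follows essentially the same route as the paper, which simply transfers the upward-conditioning machinery (Lemma~\ref{lem: upward conditioning} and Theorem~\ref{thm: h-transform}) to the $Q$-local martingale $1/X$, using Lemma~\ref{C 1/x} and $b=0$ to verify $Q(T_{1/a}\wedge T_\infty<\mathfrak{T})=1$. The one place where you go beyond what the paper spells out is the identification $\widetilde{P}=P$ via the double localization $\tau_n=T_{1/n}\wedge T_n$, which makes both stopped density processes uniformly integrable and correctly circumvents the fact that $X^{T_{1/a}}$ may fail to be a uniformly integrable $P$-martingale; this is a legitimate and welcome filling-in of a detail the paper leaves implicit.
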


\section{Diffusions}
\label{sec: diff}
In this section, we apply Theorems~\ref{thm: h-transform} and \ref{thm: downward conditioning} to diffusions.

\subsection{Definition and $h$-transform for diffusions}
We call \emph{diffusion} any time-homogeneous strong Markov process $Y: C_{\mathrm{abs}}\times[0, \infty) \rightarrow [l,r]$ with continuous paths in a possibly infinite interval $[l,r]$ with $-\infty \leq l < r \leq \infty$. Note that we explicitly allow $Y$ to take the values $l$ and $r$;
we stop $Y$ once it hits the boundary of $[l,r]$.
We define $\tau_a$ for all $a \in [l,r]$ as in \eqref{E T} with $X$ replaced by $Y$. 
We denote the probability measure under which $Y_0 = y \in [l,r]$ by $P_y$. 

Since $Y$ is Markovian it has an \emph{infinitesimal generator} (see page 161 in Ethier and Kurtz \cite{EK_Markovian}). 
As we do not assume any regularity of the semigroup of $Y$, we find it convenient to work with the following \emph{extended infinitesimal generator}:
A continuous function $f:[l,r] \rightarrow \R \cup\{-\infty, \infty\}$ with $f|_\R \in \R$ is in the \emph{domain} of the extended infinitesimal generator $\L$ of $Y$ if there exists a continuous function $g:[l,r] \rightarrow \R \cup\{-\infty, \infty\}$ with $g|_\R \in \R$, and an increasing sequence of stopping times $\{\rho_n\}_{n \in \N}$, such that $P_y(\lim_{n \rightarrow \infty} \rho_n \ge \tau_l \wedge \tau_r) = 1$ and 
	\begin{align*}
                f(Y^{\rho_n}_\cdot) - f(y) - \int_0^{\cdot\wedge \rho_n} g(Y_s) \dd s
             \end{align*}
             is a $P_y$-martingale for all $y \in (l,r)$.
In that case we write $f \in \mathrm{dom}(\L)$ and $\L f = g$.

Throughout this section we shall work with a  \emph{regular} diffusion $Y$; that is, 
 for all $y,z \in (l,r)$ we have that $P_y(\tau_z < \infty) > 0$.  In that case there always exists a continuous, strictly increasing function $s:  (l,r) \rightarrow \R\cup\{-\infty, \infty\}$, uniquely determined up to an affine transformation, such that $s(Y)$ is a local martingale (see Propositions~VII.3.2 and VII.3.5 in Revuz and Yor \cite{RY}). We call every such $s$ a \emph{scale function} for $Y$, and we extend its domain to $[l,r]$ by taking limits.
 The next result summarizes Proposition~VII.3.2 in \cite{RY} and describes the relationship of the scale function $s$ and the limiting behaviour of $Y$:

\begin{lem}[Scale function]\label{lem: diff bdry bhv} We have that
\begin{enumerate}
	\item $P_y(\tau_l = \mathfrak{T}) = 0$ for one (and then for all) $y \in (l,r)$ if and only if $s(l) \in \R$ and $s(r) = \infty$;
	\item $P_y(\tau_r = \mathfrak{T}) = 0$ for one (and then for all) $y \in (l,r)$ if and only if $s(l) = - \infty$ and $s(r) \in \R$;
	\item $P_y(\tau_l \wedge \tau_r = \mathfrak{T}) = 0$ and $P_y(\tau_l < \mathfrak{T}) \in (0,1)$ for one (and then for all) $y \in (l,r)$ if and only if $s(l)  \in \R$ and $s(r) \in \R$.
\end{enumerate}
\end{lem}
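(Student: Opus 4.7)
The plan is to reduce everything to the classical two-sided exit formula for $s(Y)$ and then pass to the boundary of $(l,r)$. Fix $y \in (l,r)$ and choose interior levels $l < a < y < c < r$. Since $s(Y)$ is a continuous local martingale and $s$ is bounded on $[a,c]$, the stopped process $s(Y^{\tau_a \wedge \tau_c})$ will be a bounded martingale, hence $P_y$-a.s.\ convergent; by strict monotonicity and continuity of $s$, so will $Y^{\tau_a \wedge \tau_c}$ itself. My first step is to argue, using regularity of $Y$ together with the strong Markov property, that any putative interior cluster point would be left with positive probability, so the limit must belong to $\{a,c\}$ and $P_y(\tau_a \wedge \tau_c < \mathfrak{T}) = 1$. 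Optional stopping at infinity then gives
\begin{align*}
s(y) = s(a)\,P_y(\tau_a < \tau_c) + s(c)\,P_y(\tau_c < \tau_a),
\end{align*}
which rearranges to the classical
\begin{align*}
P_y(\tau_a < \tau_c) = \frac{s(c)-s(y)}{s(c)-s(a)}, \qquad P_y(\tau_c < \tau_a) = \frac{s(y)-s(a)}{s(c)-s(a)}.
\end{align*}

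Next I would choose sequences $a_n \downarrow l$ and $c_n \uparrow r$ in $(l,r)$. By path continuity, $\tau_{a_n}$ is monotone increasing with $P_y$-a.s.\ limit $\tau_l$, and likewise $\tau_{c_n} \uparrow \tau_r$. Consequently, up to $P_y$-null sets, $\{\tau_{a_n} < \tau_{c_n}\} \uparrow \{\tau_l < \tau_r\}$ and $\{\tau_{c_n} < \tau_{a_n}\} \uparrow \{\tau_r < \tau_l\}$, and continuity of probability combined with the explicit ratios above yields
\begin{align*}
P_y(\tau_l < \tau_r) = \frac{s(r)-s(y)}{s(r)-s(l)}, \qquad P_y(\tau_r < \tau_l) = \frac{s(y)-s(l)}{s(r)-s(l)},
\end{align*}
with the conventions that a ratio with an infinite denominator but finite numerator is zero, and both ratios are $0$ when both endpoints of $s$ are infinite.

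The three cases of the lemma will then fall out by inspection. If $s(l), s(r) \in \R$, the two probabilities lie strictly between $0$ and $1$ and sum to $1$, so $\tau_l \wedge \tau_r < \mathfrak{T}$ a.s.\ and $P_y(\tau_l < \mathfrak{T}) \in (0,1)$; conversely, if the condition in~(3) holds then $P_y(\tau_r < \tau_l) > 0$ forces $s(l) > -\infty$ and $P_y(\tau_l < \tau_r) > 0$ forces $s(r) < \infty$. For~(1), $P_y(\tau_l = \mathfrak{T}) = 0$ means $Y$ almost surely exits through $l$ in trans-finite time, so $P_y(\tau_r < \tau_l) = 0$ and $P_y(\tau_l < \tau_r) = 1$; reading off the ratios, this is equivalent to $s(r) = \infty$ and $s(l) \in \R$. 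Case~(2) is symmetric.

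The part I expect to be most delicate is the rigorous justification that $P_y(\tau_a \wedge \tau_c < \mathfrak{T}) = 1$ from the bare assumption of regularity, and the accompanying bookkeeping of the trans-infinite time $\mathfrak{T}$ when $s$ is infinite at an endpoint, so that the distinction between $\tau_l = \mathfrak{T}$ and $\tau_l = \infty$ tracks finiteness versus unboundedness of $s$ correctly at $l$. Everything else should be a direct computation with the scale-function martingale and monotone limits of events.
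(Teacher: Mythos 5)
The paper itself offers no proof of this lemma: it is stated as a summary of Proposition~VII.3.2 in Revuz and Yor, so there is no in-paper argument to compare against. Your proposal reconstructs the classical proof of exactly that cited result --- two-sided exit probabilities for the local martingale $s(Y)$ on compact subintervals $[a,c]$, followed by a passage to the boundary --- and the overall structure, the exit formula $P_y(\tau_a < \tau_c) = \bigl(s(c)-s(y)\bigr)/\bigl(s(c)-s(a)\bigr)$, and the case analysis by finiteness of $s(l)$ and $s(r)$ are all correct. You also correctly identify the two genuinely delicate points: that regularity plus the strong Markov property is what forces $P_y(\tau_a\wedge\tau_c<\mathfrak{T})=1$, and that the bookkeeping with $\mathfrak{T}$ versus $\infty$ must be done carefully (note that absorption at the boundary gives the useful identity $\{\tau_l<\mathfrak{T}\}=\{\tau_l<\tau_r\}$, which you implicitly use in case~(1)).

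One step, as written, is false and needs repair: the events $\{\tau_{a_n}<\tau_{c_n}\}$ are \emph{not} monotone in $n$, because both hitting times increase simultaneously --- a path can exit $[a_n,c_n]$ through the bottom at $a_n$ and then exit the larger interval $[a_{n+1},c_{n+1}]$ through the top before ever reaching $a_{n+1}$. So ``continuity of probability'' does not apply directly. The standard fixes are routine but should be spelled out: either send the two levels to the boundary one at a time (for fixed $a$ the events $\{\tau_a<\tau_{c_n}\}$ \emph{are} increasing in $n$ since only $\tau_{c_n}$ moves, and then $\{\tau_{a_m}<\tau_r\}$ is decreasing in $m$), or note that whenever $s$ is finite at one endpoint the nonnegative (or nonpositive) local martingale $s(Y)-s(l)$ converges $P_y$-a.s., so $Y$ itself converges to a boundary point by your no-interior-cluster-point argument, and the limit of the indicators $\1_{\{\tau_{a_n}<\tau_{c_n}\}}$ can be identified almost surely; the doubly infinite case $s(l)=-\infty$, $s(r)=\infty$ is then handled by the recurrence argument ($P_y(\tau_a<\tau_{c_n})\to 1$ along the genuinely monotone family, so every interior level is hit a.s.\ and $\tau_l\wedge\tau_r=\mathfrak{T}$ a.s.). With that repair the proof is complete and is, in substance, the proof the paper delegates to the literature.
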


Throughout this section, we shall work with the standing assumption that 
the scale function $s$ satisfies $s(l) > -\infty$ (Assumption~L) or $s(r) < \infty$ (Assumption~R). Without loss of generality, we shall assume that then $s(l) = 0$ or $s(r) = 0$, respectively, and that $\F = \F_{\tau_l \wedge \tau_r}$.

Since by assumption $s(Y)$ is a local martingale, it defines, under each $P_y$, a F\"ollmer measure $Q_y$ as in Section~\ref{S general}, where we would set $X := s(Y)/s(y)$, for all $y \in [l,r]$ (with $0/0 :=\infty / \infty := 1$).  The next proposition illustrates how the extended infinitesimal generators of $Y$ under $P_y$ and $Q_y$ are related:
\begin{prop}[$h$-transform for diffusions]  \label{P generator}
	The process $Y$ is  a regular diffusion under the probability measures $\{Q_y\}_{y \in [l,r]}$. Its extended infinitesimal generator $\L^s$ under $\{Q_y\}_{y \in [l,r]}$ is given by $\mathrm{dom}(\L^s) = \{\varphi: s \varphi \in \mathrm{dom}(\L)\}$ and 
   \begin{align*} 
      \L^{s} \varphi(y) = \frac{1}{s(y)} \L[s \varphi](y).
\end{align*}  
\end{prop}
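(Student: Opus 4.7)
The plan is to identify $Q_y$ as the F\"ollmer measure associated to the nonnegative $P_y$-local martingale $s(Y)/s(y)$ (replacing $s$ by $-s$ under Assumption~R if needed) and to deduce everything from the change-of-measure machinery of Section~\ref{S general}. There are two assertions to establish: that $Y$ remains a regular, time-homogeneous strong Markov process under each $Q_y$, and that the generator identity $\L^s \varphi = \L(s\varphi)/s$ holds on $\mathrm{dom}(\L^s) = \{\varphi : s\varphi \in \mathrm{dom}(\L)\}$.

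For the Markov claims, the density $s(Y_\tau)/s(Y_\sigma)$ over $\F_\tau$ given $\F_\sigma$ is a deterministic function of $(Y_\sigma, Y_\tau)$, so the F\"ollmer density is a multiplicative functional; Bayes' rule then transfers strong Markovianity and time-homogeneity from $P_y$ to $Q_y$, while path continuity on $[0, \tau_l \wedge \tau_r)$ is automatic. For regularity I would split on whether $z > y$ or $z < y$. In the first case, Lemma~\ref{C 1/x} forces $Y$ to reach (or diverge to) $r$ under $Q_y$, so $Q_y(\tau_z < \infty) = 1$ by continuity. In the second, pick $\varepsilon > 0$ with $z > l+\varepsilon$ and note that $P_y$ and $Q_y$ are equivalent on $\F_{\tau_z \wedge \tau_{l+\varepsilon}}$ (since $s(Y)$ is bounded away from $0$ and $\infty$ there), so that $P_y$-regularity passes over.

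The crux is the generator identity. Let $\varphi$ satisfy $s\varphi \in \mathrm{dom}(\L)$, write $g := \L(s\varphi)$, and let $\{\rho_n\}$ be the localizing sequence making $N^n_t := s(Y^{\rho_n}_t)\varphi(Y^{\rho_n}_t) - s(y)\varphi(y) - \int_0^{t\wedge\rho_n} g(Y_u)\,\dd u$ a $P_y$-martingale. With $B^n_t := \int_0^{t\wedge\rho_n}(g/s)(Y_u)\,\dd u$, integration by parts (the cross-variation vanishes as $B^n$ has finite variation) yields
\begin{align*}
   s(Y^{\rho_n}_t)\bigl(\varphi(Y^{\rho_n}_t) - B^n_t\bigr) - s(y)\varphi(y)
      = N^n_t - \int_0^{t\wedge\rho_n} B^n_u\,\dd s(Y_u),
\end{align*}
so the left-hand side is a $P_y$-local martingale. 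Localizing further by $\sigma_n := \rho_n \wedge \tau_{l+1/n} \wedge \tau_{r-1/n}$ makes the F\"ollmer density $s(Y^{\sigma_n})/s(y)$ a bounded, hence uniformly integrable, $P_y$-martingale, and Bayes' rule translates the $P_y$-local martingale property of $s(Y^{\sigma_n})(\varphi(Y^{\sigma_n}) - B^{\sigma_n})$ into the $Q_y$-martingale property of $\varphi(Y^{\sigma_n}_\cdot) - \varphi(y) - \int_0^{\cdot\wedge\sigma_n}(g/s)(Y_u)\,\dd u$, giving $\varphi \in \mathrm{dom}(\L^s)$ with $\L^s\varphi = g/s$. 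The reverse inclusion follows by running the same argument with the pair $(Q_y, 1/s)$ in place of $(P_y, s)$, using Lemma~\ref{C 1/x} to supply the required $Q_y$-local martingality of $1/s(Y)$.

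The main obstacle is showing that $\sigma_n \uparrow \tau_l\wedge\tau_r$ also $Q_y$-almost surely, which is what legitimizes the passage through Bayes. Under Assumption~L the scale function of $Y$ under $Q_y$ turns out to be $-1/s$, which sends $l$ to $-\infty$; a direct scale-function computation gives $Q_y(\tau_z < \infty) \to 0$ as $z \downarrow l$, so $\inf_t Y_t > l$ $Q_y$-almost surely and hence $\tau_{l+1/n} = \mathfrak{T}$ eventually along $Q_y$-typical paths. Combining this with $\tau_{r-1/n} \uparrow \tau_r$ and transporting the $P_y$-convergence $\rho_n \uparrow \tau_l\wedge\tau_r$ across the locally-equivalent $\sigma$-algebras $\F_{\tau_{r-1/k}\wedge\tau_{l+1/m}}$ delivers the desired $Q_y$-convergence and closes the proof.
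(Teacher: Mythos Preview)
Your approach is essentially the paper's: the strong Markov property and regularity are obtained from local equivalence of $P_y$ and $Q_y$ on $\sigma$-algebras of the form $\F_{\tau_a\wedge\tau_b}$ with $l<a<y<b<r$ (where the density $s(Y^{\tau_a\wedge\tau_b})/s(y)$ is bounded above and bounded away from zero), and the generator identity comes from Bayes' rule combined with an integration-by-parts/Fubini manipulation that moves $s(Y)$ in and out of the running integral. The paper argues the generator inclusion starting from the $Q_y$-side and then says ``the other inclusion can be shown in the same manner''; you start from the $P_y$-side and close the loop via Lemma~\ref{C 1/x}. This is only a difference in presentation. Your explicit attention to transporting the localizing sequence across measures is a point the paper treats rather tersely.

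One small slip in your regularity argument for $z<y$: with $l+\varepsilon<z<y$ the process necessarily hits $z$ before $l+\varepsilon$, so $\F_{\tau_z\wedge\tau_{l+\varepsilon}}=\F_{\tau_z}$, and on $\F_{\tau_z}$ the density $s(Y)/s(y)$ is \emph{not} bounded in general (the path may wander arbitrarily close to $r$ before descending to $z$). The correct localization---and the one the paper uses---is $\F_{\tau_z\wedge\tau_b}$ for some $b\in(y,r)$; there the density is bounded and bounded away from zero, and $Q_y(\tau_z<\tau_b)>0$ follows from $P_y(\tau_z<\tau_b)>0$.
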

The proof of this proposition is technical and therefore postponed to Appendix~\ref{A proof generator}. 
The following observation is a direct consequence of Lemma~\ref{C 1/x} and the fact that $Y$ is a regular diffusion under the probability measures $\{Q_y\}_{y \in [l,r]}$:
\begin{lem}[Scale function for $h$-transform]  \label{L 1/s}
	Under $\{Q_y\}_{y \in [l,r]}$, the function $\widetilde{s}(\cdot) = -1/s(\cdot)$ is, with the appropriate definition of $1/0$, a scale function for $Y$ with $\widetilde{s}(l) = -\infty$, $\widetilde{s}(r) \in \R$
	under Assumption~L and with $\widetilde{s}(r) = \infty$, $\widetilde{s}(l) \in \R$
	 under Assumption~R.
\end{lem}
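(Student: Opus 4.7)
The plan is to combine Lemma~\ref{C 1/x} with the generator formula in Proposition~\ref{P generator} and then chase the arithmetic at the two endpoints. First I would establish the local-martingale property of $\widetilde{s}(Y)$ under $\{Q_y\}$. Taking $X = s(Y)/s(y)$, Lemma~\ref{C 1/x} shows that $s(y)/s(Y)$ is a $Q_y$-local martingale, and multiplying by the deterministic constant $-1/s(y)$ gives that $\widetilde{s}(Y) = -1/s(Y)$ is one as well; equivalently, via Proposition~\ref{P generator}, the product $s\,\widetilde{s} \equiv -1$ is constant, so it lies in $\mathrm{dom}(\L)$ with $\L[s\widetilde{s}] = 0$, and therefore
\[
   \L^s \widetilde{s}(y) \;=\; \frac{1}{s(y)}\, \L[s\widetilde{s}](y) \;=\; 0.
\]

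Next I would check that $\widetilde{s}$ is continuous and strictly increasing on $[l,r]$ with the conventions $-1/0 = -\infty$ on the left and $-1/0 = +\infty$ on the right. Under Assumption~L, $s(l) = 0$ and $s$ is strictly increasing, so $s > 0$ on $(l,r]$; then $-1/s$ is strictly increasing and continuous on $(l,r]$, and extending to $l$ by taking limits yields $\widetilde{s}(l) = -\infty$. Under Assumption~R, $s(r) = 0$ and $s < 0$ on $[l,r)$, so $-1/s$ is again strictly increasing and continuous, and extending to $r$ gives $\widetilde{s}(r) = +\infty$. This verifies that $\widetilde{s}$ is a scale function in the sense of the paper.

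Finally I would read off the opposite endpoint. Under Assumption~L we have $\widetilde{s}(r) = -1/s(r)$, which is $0$ if $s(r) = \infty$ and $-1/s(r) \in \R$ otherwise; either way $\widetilde{s}(r) \in \R$. The symmetric calculation under Assumption~R gives $\widetilde{s}(l) \in \R$. I do not foresee any genuine obstacle: the content of the lemma is essentially that reciprocation swaps the roles of the two boundary points, and the only care required is the bookkeeping with the extended-real conventions at the endpoint where $s$ vanishes and the insistence on strict monotonicity across that point.
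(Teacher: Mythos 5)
Your argument is correct and matches the paper's (unwritten) proof: the paper derives this lemma directly from Lemma~\ref{C 1/x} applied to $X=s(Y)/s(y)$ together with the regularity of $Y$ under $\{Q_y\}$, exactly as you do, and the endpoint bookkeeping is the same. Your alternative verification via $\L^s\widetilde{s}=\frac{1}{s}\L[s\widetilde{s}]=0$ is a harmless bonus but not needed.
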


\subsection{Conditioned diffusions}
We now are ready to formulate and prove a version of the statements of Section~\ref{S general} for diffusions:
\begin{cor}[Conditioning of diffusions]\label{cor: diffusion upward}
    Fix $y \in (l,r)$ and make Assumption~L.
	\begin{enumerate}
		\item Suppose that $P_y(\tau_l =\mathfrak{T})= 0$, which is equivalent to $s(r) = \infty$.  Then the family of probability measures $\{P_y( \cdot | \tau_a \le \tau_l)|_{\F_{\tau_a}}\}_{y<a<r}$ is consistent and thus has an
extension $\widetilde{Q}_y$ on $\F$. Moreover, the extension satisfies $\widetilde{Q}_y = Q_y$.
		\item Suppose that $P_y(\tau_l = \mathfrak{T})  > 0$, which is equivalent to $s(r) < \infty$, and define $\widehat{Q}_y = P_y(\cdot|\tau_l = \mathfrak{T})$. Then $\widehat{Q}_y$ satisfies $\widehat{Q}_y = Q_y$.
	\end{enumerate}

	Furthermore,  provided that $s(r) = \infty$, the family of probability measures $\{Q_y( \cdot| \tau_a \le \tau_r)|_{\F_{\tau_a}}\}_{l<a<y}$ is consistent. Its unique extension is $P_y$.

	Under Assumption~R, all statements still hold with $r$ exchanged by $l$ and, implicitly, $y<a<r$ exchanged by $l < a < y$. 
\end{cor}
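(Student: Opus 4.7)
The strategy is to apply Theorems~\ref{thm: h-transform} and \ref{thm: downward conditioning} to the nonnegative $P_y$-local martingale $X := s(Y)/s(y)$, whose F\"ollmer measure is $Q_y$ by construction. Under Assumption~L, normalized so that $s(l)=0$, we have $X_0=1$ and $X\ge 0$, and the hitting times match via $T^X_0=\tau_l$ and $T^X_a = \tau_{s^{-1}(as(y))}$ for $a\in(1, s(r)/s(y))$. Because $s$ is continuous and strictly increasing, the map $a\mapsto s^{-1}(as(y))$ is a bijection between $(1,s(r)/s(y))$ and $(y,r)$, so the family appearing in part~1 is precisely the upward-conditioning family $\{P_y(\cdot\mid T^X_a\le T^X_0)|_{\F_{T^X_a}}\}_{a>1}$ of Theorem~\ref{thm: h-transform}, with $\F_{T^X_a}=\F_{\tau_{s^{-1}(as(y))}}$. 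Similarly, the ``furthermore'' family corresponds to the downward-conditioning family of Theorem~\ref{thm: downward conditioning} via $T^X_{1/a}=\tau_{s^{-1}(s(y)/a)}$ and $T^X_\infty=\tau_r$.

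For part~1, Lemma~\ref{lem: diff bdry bhv} gives that $s(r)=\infty$ is equivalent to $b:=P_y(T^X_0=\mathfrak{T})=0$, so Theorem~\ref{thm: h-transform} directly yields consistency of the approximating family together with $\widetilde{Q}_y=Q_y$. For part~2, $s(r)<\infty$ bounds $X$ by $s(r)/s(y)$, making it a uniformly integrable $P_y$-martingale; optional sampling at $\tau_l\wedge\tau_r$, which is $P_y$-a.s.\ finite by the third part of Lemma~\ref{lem: diff bdry bhv}, combined with $s(l)=0$ yields $s(y)=s(r)\,b$, so $1/b=s(r)/s(y)$, and absorption at the boundary forces $X_\infty\in\{0,1/b\}$. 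The second half of Theorem~\ref{thm: h-transform} then gives $\widehat{Q}_y=Q_y$.

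For the ``furthermore'' claim, the assumption $s(r)=\infty$ together with Lemma~\ref{C 1/x} ensures that $1/X$ is a $Q_y$-local martingale, so Theorem~\ref{thm: downward conditioning} applies and identifies the unique extension of $\{Q_y(\cdot\mid\tau_{a'}\le\tau_r)|_{\F_{\tau_{a'}}}\}_{l<a'<y}$ with $P_y$. The case of Assumption~R is handled by the mirror-image argument: normalizing $s(r)=0$ keeps $X=s(Y)/s(y)$ a nonnegative $P_y$-local martingale starting at $1$, but now $T^X_0=\tau_r$ while the large-value hitting times of $X$ correspond to $\tau_{a'}$ with $a'$ running through $(l,y)$, so every step of the reasoning above goes through verbatim with $l$ and $r$ interchanged.

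The only genuine bookkeeping is the dictionary between levels $a$ for $X$ and levels $a'$ for $Y$ via the scale function, together with the identity $\F_{T^X_a}=\F_{\tau_{a'}}$ that lets the consistency and extension statements of Section~\ref{S general} transfer without change; all substantive measure-theoretic content is already packaged inside the two theorems being invoked, so I expect no hard step beyond the optional-sampling computation that determines $b$ in part~2.
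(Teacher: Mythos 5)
Your proposal is correct and follows essentially the same route as the paper: translate everything through $X = s(Y)/s(y)$ so that $\{\tau_a \le \tau_l\} = \{T_{s(a)/s(y)} \le T_0\}$, then invoke Theorem~\ref{thm: h-transform} together with Lemma~\ref{lem: diff bdry bhv} for parts 1 and 2, and Theorem~\ref{thm: downward conditioning} (via the local martingality of $1/X$, equivalently Lemma~\ref{L 1/s}) for the remaining assertion. The only difference is that you spell out the optional-sampling computation identifying $1/b = s(r)/s(y)$ and the two-valuedness of $X_\infty$, which the paper compresses into the remark that Lemma~\ref{lem: diff bdry bhv} shows $s(Y)_\infty$ takes exactly two values.
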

\begin{proof} We only consider the case of Assumption~L, as Assumption~R requires the same steps.
   We write $X = s(Y)/s(y)$. The hitting times $T_a$ of $X$ are defined as in \eqref{E T}.   Since $s$ is strictly increasing, we have that, for all $y < a < r$,
   \begin{align*}
      \{\tau_a \le \tau_l\} = \{T_{s(a)/s(y)} \le T_0\}.
   \end{align*}
   Since $X$ is a nonnegative local martingale with $P_y(X_0=1)=1$, the statements in 1.~and 2.~follow immediately from Theorem~\ref{thm: h-transform} and Lemma~\ref{lem: diff bdry bhv}, which shows that $s(Y)_\infty$ takes exactly
two values.  The remaining assertions follow from Lemma~\ref{L 1/s} and Theorem~\ref{thm: downward conditioning}.
\end{proof}

It is clear that the measure $Q$ under Assumption~L corresponds to the upward conditioned diffusion $Y$, while under Assumption~R it corresponds to the downward conditioned diffusion.

 After finishing this manuscript we learned about Kardaras \cite{Kardaras_times}. Therein, by similar techniques it is shown that $Y$ under $Q$ tends to infinity if $s(r) = \infty$; see Section~6.2 in \cite{Kardaras_times}.
 In Section~5 therein, a similar probability measure is constructed for a L\'evy process $X$ that drifts to $-\infty$. After a change of measure of the form $s(X)$ for a harmonic function $s$, the process $X$ under the new measure
drifts now again to infinity.

\subsection{Explicit generators}
In this section we formally derive the dynamics of upward conditioned and downward conditioned diffusions. For this purpose suppose that $Y$ is a diffusion with extended infinitesimal generator $\L$, such that $\mathrm{dom}(\L) \supseteq C^2$, where $C^2$ denotes the space of twice continuously differentiable functions on $(l,r)$, and
\begin{align*}
   \L \varphi(y) = b(y) \varphi'(y) + \frac{1}{2} a(y) \varphi''(y), \quad \varphi \in C^2
\end{align*}
for some locally bounded functions $b$ and $a$ such that $a(y) > 0$ for all $y \in (l,r)$.

Finding the scale function then at least formally corresponds to solving the linear ordinary differential equation
\begin{align}  \label{E ode s}
   b(y) s'(y) + \frac{1}{2} a(y) s''(y) = 0.
\end{align}
This is for example done in Section~5.5.B of Karatzas and Shreve \cite{KS1}.  From now on, we continue under either Assumption~L or Assumption~R with $s$ being either nonnegative or nonpositive.
We plug  $s$ into the definition of $\L^s$. 
Towards this end, let $\varphi \in C^2$. Then we have that
\begin{align*}
   \L^s \varphi(y) & = \frac{1}{s(y)} \L (s \varphi)(y) = \frac{1}{s(y)} \left( b(y) (s\varphi)'(y) + \frac{1}{2} a(y) (s\varphi)''(y)\right) \\
   & = \frac{1}{s(y)} \left( b(y) (s'(y)\varphi (y)+ s(y) \varphi'(y)) + \frac{1}{2} a(y) (s''(y)\varphi(y) + 2s'(y)\varphi'(y) + s(y)\varphi''(y))\right) \\
   & =  \left(b(y) + \frac{a(y) s'(y)}{s(y)}\right) \varphi'(y) + \frac{1}{2} a(y) \varphi''(y)
\end{align*}
since $s'' = -2(b/a) s'$ due to \eqref{E ode s}. Therefore, the upward or downward conditioned process has an additional drift of $(as')/s$. This drift is always positive (or always negative), as is to be expected. 

Now, under Assumption~L (upward conditioning) with $l=0$, if $b = 0$, then $s(y) = y$; therefore the additional drift of the upward conditioned process is $a(y)/y$. 
Under Assumption~R (downward conditioning) with $r=\infty$, if $b(y) = a(y)/y$, then \eqref{E ode s} yields
 $s(y) = -\frac{1}{y}$ and thus an additional drift of $-a(y)/y = -b(y)$. These observations lead to the well-known fact:
\begin{cor}[(Geometric) Brownian motion]
   A Brownian motion conditioned on hitting $\infty$ before hitting $0$ is a three-dimensional Bessel process.  
Vice versa, a three-dimensional Bessel process conditioned to hit 0 is a Brownian motion.
   Moreover, a geometric Brownian motion conditioned on hitting $\infty$ before hitting 0 is a geometric Brownian motion with unit drift.
\end{cor}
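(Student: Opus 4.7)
The plan is to apply the formula for the conditioned generator derived in the preceding paragraphs directly to each of the three examples, since the corollary is precisely the catalogue of what this formula yields in each concrete case. For each example I would (i) read off the coefficients $(b,a)$, (ii) solve \eqref{E ode s} to obtain a scale function $s$ compatible with Assumption~L or Assumption~R, (iii) compute the additional drift $a s'/s$ and hence the conditioned generator $\L^{s}\varphi = (b + as'/s)\varphi' + \tfrac12 a \varphi''$, and (iv) match the result to the announced diffusion via its extended generator on $C^2$. Corollary~\ref{cor: diffusion upward} then converts the equality of $h$-transformed and (up/down)ward conditioned measures into the claimed pathwise identification.

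For Brownian motion stopped at $0$ we have $b\equiv 0$, $a\equiv 1$, so \eqref{E ode s} gives $s(y)=y$ with $s(l)=0$, and the additional drift is $1/y$; the conditioned generator $\tfrac{1}{y}\varphi'(y) + \tfrac{1}{2}\varphi''(y)$ is that of the three-dimensional Bessel process. For the three-dimensional Bessel process, $b(y)=1/y$ and $a(y)=1$, so $b(y)=a(y)/y$ and the second computation of the subsection applies with $r=\infty$, $s(y)=-1/y$ and additional drift $-1/y$; this cancels the Bessel drift, leaving the Brownian generator $\tfrac{1}{2}\varphi''(y)$. For a driftless geometric Brownian motion $dY_t = Y_t\,dW_t$ we have $b\equiv 0$ and $a(y)=y^2$, so again $s(y)=y$ is a scale function with $s(0)=0$ and $s(\infty)=\infty$; the additional drift is $a(y)/y = y$, and the conditioned generator
\begin{align*}
   \L^{s}\varphi(y) = y\,\varphi'(y) + \tfrac{1}{2}y^2\,\varphi''(y)
\end{align*}
is that of $dY_t = Y_t\,dt + Y_t\,dW_t$, a geometric Brownian motion with unit drift.

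The one step that is not simply plugging into the formula is the final identification: concluding that a regular diffusion with a prescribed extended generator on $C^2$ is in law the classical process one has in mind. Under our local boundedness of $a,b$ and non-degeneracy of $a$ this is standard well-posedness for the associated martingale problem on $(l,r)$, and Proposition~\ref{P generator} ensures the conditioned process is itself a regular diffusion so that such an identification is meaningful. The only mild subtlety is verifying the relevant assumption in each case: Assumption~L with $P_y(\tau_0 = \mathfrak{T})=0$ holds for Brownian motion (since $s(r)=\infty$) and for geometric Brownian motion (where $s(\infty)=\infty$ so $Y_\infty = 0$ counts as hitting $0$ at time $\infty<\mathfrak{T}$); Assumption~R with $s(l)=s(0)=-\infty$, $s(r)=0$ holds for the Bessel process. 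Beyond these routine checks I do not expect any real obstacle.
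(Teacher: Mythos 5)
Your proposal is correct and follows essentially the same route as the paper: the corollary is stated there as an immediate consequence of the explicit-generator computation in the preceding paragraphs (solve \eqref{E ode s}, read off the additional drift $as'/s$, and match the resulting generator), which is exactly what you do for all three examples, including the correct handling of the $\mathfrak{T}$-convention for geometric Brownian motion and of Assumption~R for the Bessel process. The only difference is that you make explicit the (routine) martingale-problem well-posedness step that the paper leaves implicit.
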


\appendix
\section{Conditioning on nullsets}    \label{A conditioning}
Before Theorem~\ref{thm: h-transform}, we constructed a probability measure $\widetilde{Q}$ by conditioning $P$ on the nullset $\{T_0 = \mathfrak{T}\} = \bigcap_{a\in [0, \infty)} \{T_a \le T_0\}$  using an extension theorem. 
It is important to point out that the choice of the approximating sequence of events, necessary for this construction, is highly relevant.  We remark that this has been illustrated before by
Knight \cite{Knight_1969} with another example, which, in our opinion, is slightly more involved than the one presented in the following.

To illustrate the issue, consider the continuous martingale $\widetilde{X}$, defined as 
\begin{align*}
	\widetilde{X}_t = X_t + (X_t - 1) \1_{\{T_{3/4}\geq t\}} +  \left(\frac{1}{8} - \frac{X_t}{2}\right)\1_{\{T_{3/4} < t \leq T_{1/4}\}};
\end{align*}
the  process $\widetilde{X}$ moves twice as much as $X$ until $X$ hits $3/4$, then it moves half as much as $X$ until $X$ catches up, which occurs when $X$ hits $1/4$. With this understanding, it is clear that $\widetilde{X}$ hits zero exactly when $X$ hits zero. Therefore,  we have that $\{T_0 = \mathfrak{T}\} = \bigcap_{a\in [0, \infty)}  \{\widetilde{T}_a \le \widetilde{T}_0\}$, where $\widetilde{T}_a$ is defined exactly like $T_a$ with $X$ replaced by $\widetilde{X}$ in \eqref{E T}.

Now, it is easy to see that $P(\cdot|\widetilde{T}_a \le T_0)$ defines a consistent family of probability measures  on the filtration $(\F_{T_0 \wedge \widetilde{T}_a})_{a >1}$; namely the one defined through the Radon-Nikodym derivatives $\widetilde{X}_{T_a}$. Since $P(\widetilde{X}_{T_a} \neq X_{T_a}) > 0$, the induced measure differs from the one in Theorem~\ref{thm: h-transform}. Therefore, although in the limit we condition on the same event, the induced probability measures strongly depend  on the approximating sequence of events.

\section{Proof of Proposition~\ref{P generator}}  \label{A proof generator}

We only discuss the case $s(l) = 0$ since the case $s(r)=0$ follows in the same way. 
In order to show the Markov property of $Y$ under $Q_y$, we need to prove that
\begin{align*}
	      E_{Q_y}(f(Y_{\rho + t})|\F_{\rho}) = E_{Q_y}(f(Y_{\rho + t})|Y_{\rho})
\end{align*}
for all $t \geq 0$, for all bounded and continuous functions $f: [l,r] \rightarrow \R$, and for all finite stopping times  $\rho$.  On the event $\{\rho \geq \tau_r\}$, the equality holds trivially as $Y$ gets absorbed in $l$ and $r$. On the event $\{\rho < \tau_r\}$, observe that
\begin{align*}
	E_{Q_y}(f(Y_{\rho + t})|\F_{\rho}) &=  \lim_{a \uparrow r} E_{Q_y}(f(Y^{\tau_a}_{\rho + t})|\F_{\rho})  =  \lim_{a \uparrow r} E_{Q_y}(f(Y^{\tau_a}_{\rho + t})|Y^{\tau_a}_{\rho}) 
		= E_{Q_y}(f(Y_{\rho + t})|Y_{\rho}),
\end{align*}
where the second equality follows from the generalized Bayes' formula in Proposition~C.2 in \cite{CFR2011} and the Markov property of $Y^{\tau_a}$ under $P_y$.  Therefore, $Y$ is strongly Markovian under $Q_y$.
Since $Y$ is also time-homogeneous under any of the measures $Q_y$, we have shown that $Y$ is a diffusion under
$\{Q_y\}_{y \in [l,r]}$.

As for the regularity, fix $a \in (l,y)$ and $b \in (y,r)$.  Observe that  $Q_y$ is equivalent to $P_y$ on $\F_{\tau_a \wedge \tau_b}$.  This fact in conjunction with the regularity of $Y$ under $P$ and Proposition~VII.3.2 in \cite{RY} yields that $Q_y(\tau_a<\infty)>0$ as well as $Q_y(\tau_b < \infty)>0$.

   Denote now the extended infinitesimal generator of $Y$ under $\{Q_y\}_{y \in [l,r]}$ by $\G$, let $\varphi \in \mathrm{dom}(\G)$ with localizing sequence $\{\rho_n\}_{n \in \mathbb{N}}$, and fix $y \in (l,r)$. Fix two sequences $\{a_n\}_{n \in \N}$ and  $\{b_n\}_{n \in \N}$ with $a_n \downarrow l$ and $b_n \uparrow r$ as $n \uparrow \infty$. We may assume, without loss of generality, that $\rho_n \leq \tau_{a_n} \wedge \tau_{b_n}$. By definition of the extended infinitesimal generator,
   \begin{align*}
      \varphi(Y^{\rho_n}_\cdot) - \varphi(y) - \int_0^{\cdot\wedge \rho_n} \G\varphi(Y_s) \dd s
   \end{align*}
   is a $Q_y$-martingale.
   Since $\varphi(\cdot)$ and $\G\varphi(\cdot)$ are bounded on $[a_n, b_n]$ this fact, in conjunction with Fubini's theorem, yields that 
   \begin{align*}
      \frac{1}{s(y)}\left(\varphi(Y^{{\rho}_n}_\cdot)s(Y^{{\rho}_n}_\cdot) - \varphi(y)s(y) - \int_0^{\cdot \wedge {\rho}_n}  \G\varphi(Y^{{\rho}_n}_u) s(Y^{{\rho}_n}_u) \dd u \right)
   \end{align*}
   is a $P_y$-martingale.  Since $\{\rho_n\}_{n \in \mathbb{N}}$ converges $P_y$-almost surely to $\tau_l \wedge \tau_r$ for all $y \in (l,r)$ this implies that $\varphi s \in \mathrm{dom}(\L)$ and $\L[s\varphi](y) = \G \varphi(y) s(y) $. The other inclusion can be shown in the same manner, which completes the proof. \qed

\section{Jumps}  \label{A jumps}
Here we illustrate on a simple example that our results about upward conditioning can be extended to certain jump processes. Towards this end, we consider the canonical space of paths $\omega$ taking values in $[0, \infty]$, getting absorbed in either 0 or $\infty$, and being c\`adl\`ag on $[0, T_\infty(\omega))$. The measure $P$ is chosen in such a way that the canonical process $X$ is a purely discontinuous martingale starting in $1$, whose semimartingale characteristics under the truncation function $h(x) = x 1_{|x|\le 1}$ are given by $(0,0, \nu)$. Here $\nu$ is a predictable random measure, the compensator of the jump measure of $X$. We assume that 
\begin{align*}
   \nu(\omega, \dd s, \dd x) = \nu_{\mathrm{int}}(\omega, \dd s)\frac{1}{2} (\delta_{-1/N} + \delta_{1/N})(\dd x),
\end{align*}
for some $N \in \mathbb{N}$, where $\nu_{\mathrm{int}}$ denotes the jump intensity,  and that
\begin{align*}
   \nu_{\mathrm{int}}(\omega, \dd s) \ll X_{s-}(\omega) \nu_{\mathrm{int}}(\omega, \dd s);
\end{align*}
to wit, $X$ only has jumps of size $\pm 1/N$ and gets absorbed when hitting 0.
We furthermore assume
that $\nu$ is bounded away from $\infty$ and 0; that is, that for all $t \ge 0$ there exist two nonnegative functions $c(t)$ and $C(t)$ tending to infinity as $t$ increases such that 
\begin{align*}
   1_{\{X_{t-}(\omega) > 0\}}c(t) \leq \int_{[0,t]} \nu_{\mathrm{int}}(\omega, \dd s) \le C(t).
\end{align*}
For example, $X$ could be a compound Poisson process with jumps of size $\pm 1/N$, getting 
absorbed in 0. 

The conditions on $X$ guarantee that $P(T_0 < \infty) = 1$ since a one-dimensional random walk is recurrent; furthermore,  $X$ satisfies $P(T_{n/N} < \infty)>0$ for all $n \in \N$. Therefore, the assertion  of Lemma~\ref{lem: upward conditioning} holds for $a = n/N$ for all $n \in \mathbb{N}$ with $n \ge N$; hence, the $h$-transform $Q$, defined by $\dd Q|_{\F_t} = X_t\dd P|_{\F_t}$, equals the upward conditioned measure $\widetilde{Q}$, defined as the extension of the measures $\{P(\cdot|T_{n/N} \leq T_0)\}_{n \ge N}$.

Girsanov's theorem (Theorem~III.3.24 in Jacod and Shiryaev \cite{JacodS}) implies that, under the probability measure $Q = \widetilde{Q}$, the process $X$ has semimartingale characteristics $(0,0, \nu')$, where
\begin{align*}
   \nu'(\omega, \dd s, \dd x) = \nu_{\mathrm{int}}(\omega, \dd s) \frac{1}{2}\left( \frac{X_{s-}(\omega) - \frac{1}{N}}{X_{s-}(\omega)} \delta_{-1/N} + \frac{X_{s-}(\omega) + \frac{1}{N}}{X_{s-}(\omega)} \delta_{1/N}\right)(\dd x).
\end{align*}
These computations show that we cannot expect $1/X$ to be a $Q$-local martingale; indeed, in our example, the process $1/X$ is bounded by $N$ and a true $Q$-supermartingale. Thus, we cannot obtain $P$ through conditioning 
$X$ downward as we did for the continuous case in Subsection~\ref{sec: downwd}.  

Consider now the case of deterministic jump times with
\begin{align*}
   \nu_{\mathrm{int}}(\omega, \dd s) = \sum_{n=1}^{\infty} 1_{\{X_{s-}(\omega) > 0\}} \delta_{n \delta t} (\dd s),
\end{align*} where $\delta t := 1/N^2$. With a slight misuse of notation  allowing $X_0$ to take the value $x = n/N$ for some $n \in \mathbb{N}$, observe that, for all $C^2$-functions $f$, 
\begin{align*}
   \frac{1}{\delta t} \left( E_Q[f(X_{\delta t})| X_0 = x] - f(x) \right) & = \frac{1}{\delta t} \left( E_P\left[\left.f(X_{\delta t})\frac{X_{\delta_t}}{x}\right| X_0 = x\right] - f(x) \right) \\
   & = N^2 \left[ \frac{1}{2}f\left(x+\frac{1}{N}\right)\frac{x + 1/N}{x} + \frac{1}{2} f\left(x - \frac{1}{N}\right)\frac{x- 1/N}{x} - f(x) \right] \\
   & = \frac{1}{2} N^2 \left[f\left(x+\frac{1}{N}\right) + f\left(x-\frac{1}{N}\right) - 2f(x)\right] \\
   & \qquad + \frac{1}{x} \cdot \frac{N}{2}\left[f\left(x+\frac{1}{N}\right) + f\left(x-\frac{1}{N}\right) \right] \\
   & \simeq \frac{1}{2} f''(x) + \frac{1}{x} f'(x).
\end{align*}
Using arguments based on the martingale problem, we obtain the weak convergence of $X$ under $Q$ to a Bessel process as $N$ tends to infinity (see Corollary~4.8.9 in \cite{EK_Markovian}). On the other side, Donsker's theorem implies that $X$ converges weakly to a Brownian motion under $P$. We thus recover Pitman's proof that upward conditioned Brownian motion is a Bessel process; see Pitman~\cite{Pitman_1975}.

\bibliography{aa_bib}{}

\providecommand{\bysame}{\leavevmode\hbox to3em{\hrulefill}\thinspace}
\providecommand{\MR}{\relax\ifhmode\unskip\space\fi MR }
\providecommand{\MRhref}[2]{%
  \href{http://www.ams.org/mathscinet-getitem?mr=#1}{#2}
}
\providecommand{\href}[2]{#2}
\begin{thebibliography}{10}

\bibitem{Baudoin_2002}
Fabrice Baudoin, \emph{{Conditioned stochastic differential equations: theory,
  examples and application to finance}}, Stochastic Process. Appl. \textbf{100}
  (2002), 109--145. \MR{1919610}

\bibitem{CFR2011}
Peter Carr, Travis Fisher, and Johannes Ruf, \emph{On the hedging of options on
  exploding exchange rates}, Preprint, arXiv:1202.6188, 2012.

\bibitem{DS_Bessel}
Freddy Delbaen and Walter Schachermayer, \emph{Arbitrage possibilities in
  {Bessel} processes and their relations to local martingales}, Probab. Theory
  Related Fields \textbf{102} (1995), no.~3, 357--366. \MR{1339738}

\bibitem{Doob_1957}
Joseph~L. Doob, \emph{{Conditional {B}rownian motion and the boundary limits of
  harmonic functions}}, Bull. Soc. Math. France \textbf{85} (1957), 431--458.
  \MR{0109961}

\bibitem{EK_Markovian}
Stewart~N. Ethier and Thomas~G. Kurtz, \emph{Markov processes: Characterization
  and convergence}, John Wiley \& Sons, Hoboken, NJ, 1986. \MR{0838085}

\bibitem{F1972}
Hans F\"ollmer, \emph{The exit measure of a supermartingale}, Z.
  Wahrscheinlichkeitstheorie und Verw. Gebiete \textbf{21} (1972), 154--166.
  \MR{0309184}

\bibitem{JacodS}
Jean Jacod and Albert~N. Shiryaev, \emph{Limit theorems for stochastic
  processes}, 2nd ed., Springer, Berlin, 2003. \MR{1943877}

\bibitem{KS1}
Ioannis Karatzas and Steven~E. Shreve, \emph{Brownian motion and stochastic
  calculus}, 2nd ed., Springer, Berlin, 1991. \MR{1121940}

\bibitem{Kardaras_times}
Constantinos Kardaras, \emph{On the stochastic behavior of optional processes
  up to random times}, Preprint, arXiv:1007.1124, 2012.

\bibitem{Knight_1969}
Frank~B. Knight, \emph{{Brownian local times and taboo processes}}, Trans.
  Amer. Math. Soc. \textbf{143} (1969), 173--185. \MR{0253424}

\bibitem{McKean_1963}
Henry~P. McKean, \emph{{Excursions of a non-singular diffusion}}, Z.
  Wahrscheinlichkeitstheorie und Verw. Gebiete \textbf{1} (1963), 230--239.
  \MR{0162282}

\bibitem{M}
Paul~A. Meyer, \emph{La mesure de {H. F\"ollmer} en th\'{e}orie de
  surmartingales}, S\'{e}minaire de Probabilit\'{e}s, VI, Springer, Berlin,
  1972, pp.~118--129. \MR{0368131}

\bibitem{PP}
Soumik Pal and Philip~E. Protter, \emph{Analysis of continuous strict local
  martingales via h-transforms}, Stochastic Process. Appl. \textbf{120} (2010),
  no.~8, 1424--1443. \MR{2653260}

\bibitem{Pitman_1975}
James~W. Pitman, \emph{One-dimensional {B}rownian motion and the
  three-dimensional {B}essel process}, Adv. in Appl. Probab. \textbf{7} (1975),
  no.~3, 511--526. \MR{0375485}

\bibitem{Pitman_Yor_1981}
Jim Pitman and Marc Yor, \emph{{Bessel processes and infinitely divisible
  laws}}, Stochastic Integrals, LMS Durham Symposium 1980, 1981, pp.~285--370.
  \MR{0620995}

\bibitem{RY}
{Daniel} Revuz and {Marc} Yor, \emph{Continuous martingales and {B}rownian
  motion}, 3rd ed., Springer, Berlin, 1999. \MR{1725357}

\bibitem{RoynetteYor}
Bernard Roynette and Marc Yor, \emph{Penalising {B}rownian paths}, Springer,
  Berlin, 2009. \MR{2504013}

\bibitem{SV_multi}
Daniel~W. Stroock and S.~R.~Srinivasa Varadhan, \emph{Multidimensional
  {D}iffusion {P}rocesses}, Springer, Berlin, Berlin, 2006, Reprint of the 1997
  edition. \MR{2190038}

\bibitem{Williams_1974}
David Williams, \emph{{Path decomposition and continuity of local time for
  one-dimensional diffusions, I.}}, Proc. Lond. Math. Soc. (3) \textbf{28}
  (1974), 738--768. \MR{0350881}

\end{thebibliography}
\bibliographystyle{amsplain}

\end{document}